\documentclass[noinfoline]{article}

\RequirePackage[OT1]{fontenc}
\RequirePackage[
amsthm,amsmath
]{imsart}

\usepackage{graphicx}
\usepackage{latexsym,amsmath}
\usepackage{amsmath,amsthm,amscd}
\usepackage{amsfonts}
\usepackage[psamsfonts]{amssymb}
\usepackage{enumerate}
\usepackage{xcolor}

\usepackage{url}

\usepackage{tocvsec2}

\usepackage{epstopdf}
\usepackage{wrapfig}
\usepackage{float}
\usepackage{hyperref}



\startlocaldefs


\theoremstyle{plain} 
\newtheorem{theorem}{Theorem}

\newtheorem{proposition}[theorem]{Proposition}

\theoremstyle{definition} 

\theoremstyle{definition} 

\newtheorem*{ex*}{Example}
\theoremstyle{remark} 

\theoremstyle{remark} 
\newtheorem{remark}[theorem]{Remark}
\newtheorem*{remark*}{Remark}

\reversemarginpar

\newcommand{\dd}{\partial}

\renewcommand{\dd}{{\,\operatorname{d}}}

\newcommand{\de}{\delta}
\newcommand{\del}{d}

\renewcommand{\Psi}{\overline{\Phi}}

\newcommand{\E}{\operatorname{\mathsf{E}}}

\newcommand{\R}{\mathbb{R}}

\renewcommand{\le}{\leqslant}
\renewcommand{\ge}{\geqslant}

 \pagenumbering{arabic}

\endlocaldefs

\begin{document}

\begin{frontmatter}

\title{An optimal bound on the quantiles of a certain kind of distributions}
\runtitle{Bound on the quantiles}

%

\begin{aug}
\author{\fnms{Iosif} \snm{Pinelis}\thanksref{t2}\ead[label=e1]{ipinelis@mtu.edu}}
  \thankstext{t2}{Supported in part by NSA grant H98230-12-1-0237
  }
\runauthor{Iosif Pinelis}


\address{Department of Mathematical Sciences\\
Michigan Technological University\\
Houghton, Michigan 49931, USA\\
E-mail: \printead[ipinelis@mtu.edu]{e1}}
\end{aug}

\begin{abstract} 
An optimal bound on the quantiles of a certain kind of distributions is given. 
Such a bound is used in applications to Berry--Esseen-type bounds for nonlinear statistics.  
\end{abstract}

  
%

\setattribute{keyword}{AMS}{AMS 2010 subject classifications:}

\begin{keyword}[class=AMS]
\kwd[Primary ]{60E15}
\kwd[; secondary ]{62E17} 
\end{keyword}


\begin{keyword}
\kwd{quantiles}
\kwd{distributions}
\kwd{optimal bounds}
\kwd{moments}
\kwd{Berry--Esseen bounds}
\kwd{probability inequalities}
\kwd{nonlinear statistics}
\end{keyword}

\end{frontmatter}

%
%


\theoremstyle{plain} 



Let $\mu$ be any probability measure $\mu$ on $(0,\infty)$. For any real $p$, let 
\begin{equation}\label{eq:mu_p}
	\mu_p:=\int_{(0,\infty)}x^{p-2}\,\mu(\dd x). 
\end{equation}
Consider the function $L\colon(0,\infty)\to\R$ defined by the formula 
\begin{equation*}
L(d):=L_\mu(d):=\int_{(0,\infty)}(1\wedge\tfrac\del x)\,\mu(\dd x). 	
\end{equation*}
Clearly, $L$ is continuous and nondecreasing, with $L(0)=0$ and $L(\infty-)=1$. 

Take now an arbitrary $c\in(0,1)$. Then the equation 
\begin{equation}\label{eq:L(d)=c}
L(d)=c	
\end{equation}
has a root $d\in(0,\infty)$. Moreover, this root is unique. Indeed, if 
$
L(d)=c	
$
for some $d\in(0,\infty)$, then $\int_{(\del,\infty)}\tfrac{x-d}x\,\mu(\dd x)=L(\infty-)-L(d)=1-c>0$ and hence $\int_{(\del,\infty)}\mu(\dd x)>0$ and the right derivative of the function of $L$ at the point $d$ is $\int_{(\del,\infty)}\tfrac1x\,\mu(\dd x)>0$. 
So, the definition 
\begin{equation}\label{eq:de}
	\de:=\de_\mu:=L^{-1}(c)=L_\mu^{-1}(c)
\end{equation}
is proper. 
Note that $\de$ may be viewed as the $c$-quantile of the distribution function $L$ of a probability distribution on $(0,\infty)$. 

\begin{theorem}\label{th:}
If $\mu_3<\infty$, then 
\begin{equation}\label{eq:}
\de\le\de_*:=
\left\{
\begin{alignedat}{2}
&c\mu_3 &\text{\quad if \quad}&0<c\le\tfrac12, \\ 
&\frac{\mu_3-(2c-1)^2/\mu_1}{4(1-c)} &\text{\quad if \quad}&\tfrac12\le c<1.  
\end{alignedat}
\right.
\end{equation} 
\end{theorem}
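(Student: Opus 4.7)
The plan is to work directly from the defining equation $L(\delta) = c$ and to derive the desired lower bounds. Set $Q := \mu((0,\delta])$, and split
\[
\mu_3 = u + s, \qquad \mu_1 = v + t,
\]
where $u,v$ are the integrals of $x$ and $1/x$ over $(\delta,\infty)$ and $s,t$ the analogous integrals over $(0,\delta]$. The equation $L(\delta) = c$ then reads $c = Q + \delta v$, so $\delta v = c - Q > 0$ (strictly, since $c < 1$ forces $\mu((\delta,\infty)) > 0$). Cauchy--Schwarz on $(\delta,\infty)$ yields $uv \ge (1-Q)^2$, hence
\[
u \;\ge\; \frac{(1-Q)^2\,\delta}{c-Q}.
\]
The algebraic backbone of the proof is the identity
\[
\frac{(1-Q)^2}{c-Q} \;=\; 4(1-c) + \frac{(1-2c+Q)^2}{c-Q},
\]
which is easily verified via $(1-Q)^2 - 4(1-c)(c-Q) = ((1-2c)+Q)^2$.

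\textbf{Case 1} ($0 < c \le \tfrac12$): The plan is to use the stronger estimate $(1-Q)^2/(c-Q) \ge 1/c$, which reduces to $Q(1 - 2c + cQ) \ge 0$ and so holds because $1 - 2c \ge 0$. Combined with $\mu_3 \ge u$, this gives $\mu_3 \ge \delta/c$, i.e.\ $\delta \le c\mu_3$.

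\textbf{Case 2} ($\tfrac12 \le c < 1$): The target bound is equivalent to $(\mu_3 - 4(1-c)\delta)\,\mu_1 \ge (2c-1)^2$. The identity together with $s \ge 0$ gives
\[
\mu_3 - 4(1-c)\delta \;\ge\; \frac{\delta(1-2c+Q)^2}{c-Q} + s,
\]
while Cauchy--Schwarz on $(0,\delta]$ (for $Q > 0$) gives $st \ge Q^2$, hence $\mu_1 \ge (c-Q)/\delta + Q^2/s$. Multiplying the two right-hand sides and bounding the two cross terms by AM--GM,
\[
\frac{\delta(1-2c+Q)^2\,Q^2}{s(c-Q)} + \frac{s(c-Q)}{\delta} \;\ge\; 2Q\,|1-2c+Q|,
\]
yields $(\mu_3 - 4(1-c)\delta)\,\mu_1 \ge (|1-2c+Q| + Q)^2$; a short sign-split on $1 - 2c + Q$ shows this is at least $(2c-1)^2$. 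The degenerate case $Q = 0$ is handled directly from $uv \ge 1$ and $\delta v = c$, which give $(\mu_3 - 4(1-c)\delta)\,\mu_1 \ge 1 - 4c(1-c) = (2c-1)^2$.

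The main technical hurdle is spotting the factorization identity above and then arranging the two Cauchy--Schwarz applications with the AM--GM step so that all the cross terms collapse exactly into $(2c-1)^2$; once that structure is identified, the remainder is routine algebra. Equality in Case~2 is attained by a point mass $\mu$ at $\mu_3 > \delta$, confirming that the bound is sharp and that it matches Case~1 continuously at $c = \tfrac12$.
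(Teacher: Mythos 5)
Your proof is correct, and it takes a genuinely different route from the paper's. You work in the ``primal'' direction: starting from the exact relation $L(\delta)=c$, you split every integral at the root $\delta$ (writing $Q=\mu((0,\delta])$, $\mu_3=u+s$, $\mu_1=v+t$), extract $\delta v=c-Q>0$, and then combine Cauchy--Schwarz on $(\delta,\infty)$ (giving $uv\ge(1-Q)^2$) and on $(0,\delta]$ (giving $st\ge Q^2$) with the factorization $(1-Q)^2-4(1-c)(c-Q)=(1-2c+Q)^2$ and one AM--GM step. I checked the identity, the Case 1 reduction to $Q(1-2c+cQ)\ge 0$, the sign split on $1-2c+Q$, and the degenerate subcase $Q=0$, and all are sound; multiplying your two lower bounds is legitimate since both right-hand sides are nonnegative. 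The paper instead argues by a dual, certificate-style method: it exhibits an explicit minorant $g(x)=b_0-b_3x-b_1/x$ of $f(x)=(1\wedge d/x)-c$ with double tangency at two points $u$ and $2d-u$, integrates $f\ge g$ against $\mu$ to get $L\big(\del_*(u)\big)\ge c$ for each admissible $u$, minimizes $\del_*(u)$ at $u_*=(2c-1)/\mu_1$, and concludes by monotonicity of $L$. The trade-off: the paper's tangency structure makes the equality analysis transparent---equality forces $\mu$ to be supported on the two tangency points, which is exactly what powers the paper's strictness Remark and its sharpness Proposition for arbitrary prescribed $(\mu_3,\mu_1)$ with $\mu_3\mu_1\ge 1$---whereas your argument needs no guessed minorant, no candidate value of $\delta$, no free parameter to optimize, and no appeal to monotonicity of $L$, but it hides the equality cases inside Cauchy--Schwarz and AM--GM. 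In particular, your closing point-mass example is indeed an equality case in Case 2 (then $\mu_3\mu_1=1$ and $\delta=c\mu_3=\de_*$), but it certifies sharpness only on the slice $\mu_3\mu_1=1$, which is weaker than the paper's Proposition; since sharpness is not part of the Theorem itself, this does not affect the validity of your proof.
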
 
Note that the two expressions for $\de_*$ in \eqref{eq:} in the case $c=\frac12$ have the same value, $\mu_3/2$.

\begin{proof}[Proof of Theorem~\ref{th:}] 
Consider first the case $\tfrac12<c<1$. 
Take any real numbers $\del$ and $u$ such that $0<u<\del$ and introduce also the  
functions $f$ and $g$ on $(0,\infty)$ defined by the formulas 
\begin{equation*}
	f(x):=(1\wedge\tfrac\del x)-c\quad\text{and}\quad
	g(x):=b_0-b_3x-\tfrac{b_1}x
\end{equation*}
for $x>0$, 
where  
\begin{equation*}
	b_0:=\tfrac{2(1-c)\del +(2c-1) u}{2(\del - u)},\quad b_3:=\tfrac1{4(\del - u)},\quad b_1:=\tfrac{u^2}{4(\del - u)};  
\end{equation*}
note that 
$b_1>0$, so that $g$ is strictly 
concave on $(0,\infty)$. 
Let also 
\begin{equation}\label{eq:v}
	v:=2\del-u. 
\end{equation}
Then $v\in(\del,\infty)$, and one can check that $(f-g)(w)=(f-g)'(w)=0$ for $w\in\{u,v\}$. 
Since the function $f-g$ is strictly 
convex on $(0,\del]$ and on $[\del,\infty)$, it follows that $f>g$ 
on $(0,\infty)\setminus\{u,v\}$ and $f=g$ on $\{u,v\}$. 
So, 
\begin{equation}\label{eq:chain}
\begin{aligned}
	L(\del)-c
	&=\int_{(0,\infty)}f\dd\mu \\
	&\ge\int_{(0,\infty)}g\dd\mu
	=b_0-b_3\mu_3-b_1\mu_1
	=\tfrac{1-c}{\del-u}\,\big(\del-\del_*(u)\big)=0
\end{aligned}	
\end{equation}
if $\del=\del_*(u)$, where 
\begin{equation}\label{eq:d(u_*)}
	\del_*(u):=\tfrac{\mu_3 - 2u (2c-1) + \mu_1 u^2}{4(1-c)}. 
\end{equation}
Next, 
\begin{equation*}
\del_*(u)\ge\del_*(u_*)=\de_*, 	
\end{equation*}
where
\begin{equation}\label{eq:u_*}
	u_*:=\tfrac{2c-1}{\mu_1}.  
\end{equation}
Obviously, $u_*>0$. 
Also, $\mu_p$ (defined by \eqref{eq:mu_p}) is log-convex in $p>0$ and hence 
\begin{equation}\label{eq:log-c}
	\mu_3\mu_1\ge\mu_2^2=1. 
\end{equation}
So, $\de_*-u_*=\frac{4 (1-c)^2+\mu_1 \mu_3-1}{4 (1-c) \mu_1}>0$, and hence $0<u_*<\de_*$. 
Thus, $L(\de_*)=L\big(\del_*(u_*)\big)\ge c$, and the inequality $\de\le\de_*$ in \eqref{eq:} in the case $\tfrac12<c<1$ follows by the monotonicity of the function $L$. 

The case $0<c\le\tfrac12$ is similar and even simpler. Take here $\del=\de_*=c\mu_3$ and $g(x):=c-cx/\mu_3$ for $x>0$. Then 
$(f-g)(0+)=1-2c\ge0$ and 
$(f-g)(\mu_3)=(f-g)'(\mu_3)=0$. 
Since the function $f-g$ is strictly 
convex on $[\del,\infty)$ and affine on $(0,\del]$, it follows that $f>g$ 
on $(0,\infty)\setminus\{\mu_3\}$ and $f=g$ on $\{\mu_3\}$. 
So, 
\begin{equation}\label{eq:chain2}
	L(c\mu_3)-c
	=\int_{(0,\infty)}f\dd\mu 
	\ge\int_{(0,\infty)}g\dd\mu=0,  	
\end{equation}
and the inequality $\de\le\de_*$ in \eqref{eq:} in the case $0<c\le\tfrac12$ follows by the monotonicity of $L$.
\end{proof}

\begin{remark}\label{rem:strict}
It is clear from the above proof of Theorem~\ref{th:} that the inequality $\de\le\de_*$ in \eqref{eq:} 
is strict unless the support of the measure $\mu$ consists of one point (in the case $0<c\le\tfrac12$) or of two points (in the case $\tfrac12<c<1$).  
\end{remark}

Moreover, the upper bound $\de_*$ on $\de$ is the best possible one in terms of $c$, $\mu_3$, and $\mu_1$ in the following sense: 
\begin{proposition}\label{prop:best}\ 
\begin{enumerate}[(I)]
	\item For any $c\in(0,\frac12]$ and any positive real number $\mu_{3*}$, there exists a probability measure $\mu$ on $(0,\infty)$ such that $\de=\de_*$ and \eqref{eq:mu_p} holds for $p=3$ with $\mu_{3*}$ in place of $\mu_3$. 
	\item For any $c\in(\frac12,1)$ and any positive real numbers $\mu_{3*}$ and $\mu_{1*}$ such that $\mu_{3*}\mu_{1*}\ge1$ \big(cf.\ \eqref{eq:log-c}\big), there exists a probability measure $\mu$ on $(0,\infty)$ such that $\de=\de_*$ and \eqref{eq:mu_p} holds for $p\in\{1,3\}$ with $\mu_{3*}$ and $\mu_{1*}$ in place of $\mu_3$ and $\mu_1$. 
\end{enumerate} 
\end{proposition}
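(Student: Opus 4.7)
The plan is to exhibit explicit extremal measures in each case, guided by the equality cases identified in Remark~\ref{rem:strict}: a one-point measure for (I) and a two-point measure for (II).

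For part (I), I take $\mu:=\delta_{\mu_{3*}}$, the Dirac mass at $\mu_{3*}$. Then $\mu_p=\mu_{3*}^{p-2}$, so $\mu_3=\mu_{3*}$, and $L(d)=1\wedge(d/\mu_{3*})$. Since $c\mu_{3*}\le\mu_{3*}$ for any $c\in(0,\tfrac12]$, we have $L(c\mu_{3*})=c$, so $\de=c\mu_{3*}=\de_*$.

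For part (II), motivated by the equality case in \eqref{eq:chain}, I take $\mu:=p\delta_a+q\delta_b$, where
\begin{equation*}
	a:=u_*=\tfrac{2c-1}{\mu_{1*}},\qquad b:=2\de_*-u_*,\qquad \de_*:=\tfrac{\mu_{3*}-(2c-1)^2/\mu_{1*}}{4(1-c)},
\end{equation*}
and $p,q$ are determined by the linear conditions $p+q=1$ and $pa+qb=\mu_{3*}$, i.e., $p=(b-\mu_{3*})/(b-a)$ and $q=(\mu_{3*}-a)/(b-a)$. The verification then has three pieces. First, $a>0$ since $c>\tfrac12$; the computation in the proof of Theorem~\ref{th:} (reading \eqref{eq:log-c} with $\mu_{3*},\mu_{1*}$) shows $\de_*-u_*>0$ under $\mu_{3*}\mu_{1*}\ge1$, hence $b>a>0$. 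Second, $a\le\mu_{3*}\le b$: the lower inequality follows from $\mu_{3*}\ge1/\mu_{1*}\ge(2c-1)/\mu_{1*}=a$, and for the upper inequality a short manipulation gives $b=(\mu_{3*}\mu_{1*}-(2c-1))/(2(1-c)\mu_{1*})$, so $b\ge1/\mu_{1*}$ iff $\mu_{3*}\mu_{1*}\ge1$, which in turn is equivalent to $b\ge\mu_{3*}$ (using $2\de_*-\mu_{3*}=(2c-1)b$). Hence $p,q\ge0$.

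Third, I must check that the constructed $\mu$ has the prescribed value of $\mu_1$, i.e., that $p/a+q/b=\mu_{1*}$. Clearing denominators, this reduces to the identity $a+b-\mu_{3*}=\mu_{1*}ab$, which unwinds to the defining equation $4(1-c)\de_*=\mu_{3*}-(2c-1)^2/\mu_{1*}$; so it holds automatically by our choice of $\de_*$. Finally, since $\mu$ is supported on $\{u_*,v\}$ with $v=2\de_*-u_*$, every inequality in \eqref{eq:chain} becomes an equality at $\del=\de_*$, yielding $L(\de_*)=c$ and therefore $\de=\de_*$.

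The main obstacle is simply the bookkeeping: checking that the three moment conditions are compatible (which is where the hypothesis $\mu_{3*}\mu_{1*}\ge1$ is used in an essential way, both to guarantee nonnegative weights and to make the third equation a consequence of the first two through the defining relation for $\de_*$). Once this algebraic consistency is confirmed, the rest follows mechanically from the equality case of the theorem.
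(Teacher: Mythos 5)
Your proof is correct and takes essentially the same route as the paper's: a Dirac mass at $\mu_{3*}$ for part (I), and for part (II) the two-point measure supported at $u_*$ and $v_*=2\de_*-u_*$, whose weights (your $p$, $q$ coincide exactly with the paper's $1-\pi$, $\pi$) turn the chain \eqref{eq:chain} into an equality, forcing $L(\de_*)=c$ and hence $\de=\de_*$ by uniqueness of the root. The only difference is presentational: you derive the weights from the linear moment conditions and verify their consistency (the step the paper leaves as ``one can check''), whereas the paper writes down $\pi$ explicitly.
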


\begin{proof}[Proof of Proposition~\ref{prop:best}] \ Let us consider first the more complicated case (II).   

(II). 
Take indeed any $c\in(\frac12,1)$ and any positive real numbers $\mu_{3*}$ and $\mu_{1*}$ such that $\mu_{3*}\mu_{1*}\ge1$. 
Let $\de_*$ and $u_*$ be defined as in \eqref{eq:} and \eqref{eq:u_*}, respectively, but with $\mu_{3*}$ and $\mu_{1*}$ in place of $\mu_3$ and $\mu_1$. 
As shown in the proof of Theorem~\ref{th:}, $0<u_*<\de_*$. 
Now, in accordance with \eqref{eq:v} and \eqref{eq:d(u_*)}, introduce $v_*:=2\de_*-u_*$; it follows that $v_*>\de_*$. 
Let $\mu$ be the probability measure with masses $1-\pi$ and $\pi$ at the points $u_*$ and $v_*$, respectively, where $\pi:=\frac{2 (1-c) (\mu_{3*}\mu_{1*}-(2 c-1))}{4(1-c)^2+\mu_{3*}\mu_{1*}-1}$; note that such a measure $\mu$ exists, since $0<\pi\le1$. 
Moreover, one can check that then \eqref{eq:mu_p} holds for $p\in\{1,3\}$ with $\mu_{3*}$ and $\mu_{1*}$ in place of $\mu_3$ and $\mu_1$.  
Recalling now that $f=g$ on the set $\{u,v\}$ and using \eqref{eq:chain} with $\de_*=\del_*(u_*)$, $u_*$, $\mu_{3*}$, and $\mu_{1*}$ in place of $d$, $u$, $\mu_3$, and $\mu_1$, one concludes that $\de_*=\frac1{4(1-c)}\big(\mu_{3*}-\frac{(2c-1)^2}{\mu_{1*}}\big)$ is indeed a positive real root $d$ of the equation \eqref{eq:L(d)=c}. 
Finally, the uniqueness of such a root was established in the paragraph containing \eqref{eq:de}.  

(I). 
The case $c\in(0,\frac12]$ is similar but simpler. Here we let $\mu$ be the Dirac probability measure with mass $1$ at the point $\mu_3$. Then the inequality in \eqref{eq:chain2} turns into the equality. 
\end{proof} 

Take now any natural $n$ and let $\xi_1,\dots,\xi_n$ be any 
random variables 
such that 
$
	\E\xi_1^2+\dots+\E\xi_n^2=1.  
$ 
Let then $\mu_\xi$ be the 
probability measure on $(0,\infty)$ defined by the condition 
$
	\int_{(0,\infty)}h\dd\mu=\sum _{i=1}^n \E h(|\xi_i|)\xi_i^2
$ 
for all (say) nonnegative Borel functions $h$ on $[0,\infty)$. 

For $\mu=\mu_\xi$ and the ``median'' value $c=\frac12$, the upper bound $\de_*=\mu_3/2$ on $\de$ follows immediately from the inequality due to Chen and Shao \cite[Remark 2.1]{chen07}, who showed that 
\begin{equation}\label{eq:chen-shao}
	\de\le\Big(\frac{2(p-2)^{p-2}}{(p-1)^{p-1}}\mu_p\Big)^{\frac1{p-2}} 
\end{equation}
for $p>2$. 
On the other hand, the bound $\de_*$ in \eqref{eq:} is more general than the one in \eqref{eq:chen-shao} in the sense that $c$ in \eqref{eq:} is allowed to take any value in the interval $(0,1)$; this flexibility   
allows one to improve the corresponding results in \cite{nonlinear}.

\bibliographystyle{abbrv}


\bibliography{C:/Users/iosif/Dropbox/mtu/bib_files/citations12.13.12}

\def\cprime{$'$} \def\polhk#1{\setbox0=\hbox{#1}{\ooalign{\hidewidth
  \lower1.5ex\hbox{`}\hidewidth\crcr\unhbox0}}}
  \def\polhk#1{\setbox0=\hbox{#1}{\ooalign{\hidewidth
  \lower1.5ex\hbox{`}\hidewidth\crcr\unhbox0}}}
  \def\polhk#1{\setbox0=\hbox{#1}{\ooalign{\hidewidth
  \lower1.5ex\hbox{`}\hidewidth\crcr\unhbox0}}} \def\cprime{$'$}
  \def\polhk#1{\setbox0=\hbox{#1}{\ooalign{\hidewidth
  \lower1.5ex\hbox{`}\hidewidth\crcr\unhbox0}}}
  \def\polhk#1{\setbox0=\hbox{#1}{\ooalign{\hidewidth
  \lower1.5ex\hbox{`}\hidewidth\crcr\unhbox0}}}
\begin{thebibliography}{1}

\bibitem{chen07}
L.~H.~Y. Chen and Q.-M. Shao.
\newblock Normal approximation for nonlinear statistics using a concentration
  inequality approach.
\newblock {\em Bernoulli}, 13(2):581--599, 2007.

\bibitem{nonlinear}
I.~Pinelis and R.~Molzon.
\newblock Berry-{E}ss{\'e}{e}n bounds for general nonlinear statistics, with
  applications to {P}earson's and non-central {S}tudent's and {H}otelling's
  (preprint), ar{X}iv:0906.0177v1 [math.{ST}].

\end{thebibliography}


\end{document}